\newtheorem{theorem}{Theorem}
\theoremstyle{definition}
\def\beq{ \begin{equation} }
\def\eeq{ \end{equation} }
\def\mn{\medskip\noindent}
\def\square{\vcenter{\vbox{\hrule height .4pt
  \hbox{\vrule width .4pt height 5pt \kern 5pt
        \vrule width .4pt} \hrule height .4pt}}}
\def\ep{\epsilon}
\def\E{\mathbb{E}}
\def\RR{\mathbb{R}}
\def\ER{Erd\"os-R\'enyi }
\def\clearp{\clearpage}
\begin{document}

\title{The phase transition in Chung-Lu graphs}
\author{Aolan Ding and Rick Durrett
\thanks{This work was begin in a research independent study for AD in the Fall of 2022. RD was partially supported by NSF grant DMS 2011385
from the probability program.} \\
Dept.~of Math, Duke University 
}
\date{}

\maketitle	

\section{Abstract}

In 2002, Chung and Lu introduced a version of the Erdos-Renyi model which an edge between $i$ and $j$ is present with probability $p(i,j)$. They applied this model to compute the diameter of power-law random graphs, with yielded easier proofs than those for the configuration model \cite{NSW}. In 2007 their model was brought and integrated under the umbrella of Bolobas, Janson, and Riordan's inhomogeneous random graphs. However, the properties of the Chung-Lu model were never fully explored. In this paper, we fill the gap by giving a result for the cluster sizes in the subcritical regime and the fraction of vertices in the giant component in the supercritical phase.

\section{Introduction}

In the late 1950s, Erd\"os and R\'enyi introduced the random graph $G(n,p)$ in which there are $n$ vertices and, independently for each pair of vertices, there is an edge connecting them with probability $p$. This paper focuses on the sparse case $p=\lambda/n$ which has a phase transition as $\lambda$ varies. When $\lambda < 1$ (the subcritical case) the largest connected components is of order $\log n$, while if $\lambda>1$ (the supercritical case) there is a {\bf giant component} with $\sim \sigma(\lambda) >0$ vertices. These results are proved using the observation that when we randomly pick a vertex $x$ and define $Z_m$ as the number of vertices on the graph at distance $m$ from $x$, then when $m$ is small, $Z_m$ is a branching process in which each individual has a Poisson($\lambda$) number of children.

 At the end of the 20th century it was realized that in many real networks, such as the web pages in the world wide web, the computers that make up the internet, graphs of academic coauthors and networks of sexual contacts, vertices have {\bf power law degree distributions}: $p_k \sim C k^{-\gamma}$ where $2< \gamma < 3$. This observation motivated the construction of networks with a specified degree distribution, using for example the {\bf configuration model}. One begins with i.i.d. random variables $d_i$, $1\le i \le n$ with some distribution $p_k$, and conditioned to have $d_1+ \cdots + d_n$ even. One then attaches $d_i$ half-edges to $i$ and pairs the half-edges at random. If we pick a vertex $x$ at random and let $Z_m$ be the number of vertices on the graph at distance $m$ from $x$, then when $m$ is small, $Z_m$ is a {\bf two-phase branching process} in which the first generation has distribution $p$ and the second and subsequent generations have distributions $q$. 

The ancestor has $k$ children with probability $k$, but in all the future generations the number of children follows the {\bf size-biased distribution} $q_{j-1} = jp_j/\mu$ where $\mu = \sum_k k p_k$ and the term $j-1$ comes from the fact that one half-edge is used up making a connection from the previous generation. 
If we let the finite mean of $q$ be $\nu = \sum_j jq_j$ then the condition for a giant component in the configuration model is $\nu>1$. If $p_k \sim C k^{-\gamma}$ and $\gamma>3$ then the size biased distribution has finite mean. However if $2<\gamma<3$ then the size biased distribution has infinite mean and $Z_m$ grows doubly exponentially fast. 

In the configuration model, degrees are assigned initially. This makes the model difficult to study as when connections are made the degree distribution can deviate from the original. Chung and Lu took a different approach to avoid this issue. They assigned vertex $i$ a weight $w_i$ and independently introduced edges between vertex $i$ and vertex $j$ with probability
$$
p_{ij} = \frac{w_i w_j}{\sum_k w_k}.
$$
A condition is needed on the weights to guarantee that all the $p_{ij} < 1$ but this and all other technicalities will be ignored in the introduction. 
If we include self-loops from $i$ to $i$ then the mean degree of $i$
$$
Ed_i = \sum_j \frac{w_i w_j}{\sum_k w_k} = w_i.
$$
To have a power-law degree distribution $p_k \sim C k^{-\gamma}$ Chung and Lu assumed that
$$
w_i = \theta (i/n)^{-1/(\gamma-1)}.
$$
Using their model they were able to give a fairly simple proof that in the supercritical phase of these graphs with $\gamma>3$ and $\nu>1$ the distance between two random chosen vertices is $\sim \log n/\log\nu$ just as in the \ER case. In addition, they established an interesting result that when
$2< \gamma < 3$, the distance between two randomly chosen vertices on the giant component was asymptotically at most
$$
(2+o(1)) \log\log n/(-\log(\gamma-2)).
$$

Chung and Lu were much less successful with proving results in the subcritical case, see \cite{CLsub}. That situation improved dramatically when Bollobas, Janson, and Riordan \cite{BJR} defined a family of {\bf inhomogeneous random graphs} that contained the Chung-Lu model and a number of others as a special case. To define their models we will consider a special case sufficient for our needs, let $\kappa : (0,1) \times (0,1) \to \RR$ be continuous and symmetric $\kappa(x,y)= \kappa(y,x)$,  and define the probability of an edge between $i$ and $j$ to be
\beq
p(i,j) = \kappa(i/n, j/n) /n.
\eeq
In terms of the notation in \cite{BJR} we have taken $S=(0,1)$ and $\mu$ to be Lebesgue measure.

The Chung-Lu model is an example of the {\bf rank-1 case} in which 
\beq
\kappa(x,y) = \theta \psi(x)\psi(y)
\eeq
with $\psi(x) = x^{-1/(\gamma-1)}$. To  analyze the phase transition, let $T_k$ be the integral operator on $(S,\mu)$ with kernel $\kappa$, defined by
\beq
(T_k f)(x) = \int_{S} \kappa(x,y) f(y) d\mu(y),
\eeq
for any bounded measurable function $f$.
Defining the norm of this integral operator to be
$$
\lVert T_k \rVert = \sup\{ \lVert T_k f \rVert _2 : f \geq 0, \lVert f \rVert _2 \leq 1  \},
$$ 
we have the following theorem from Bollobas, Janson, and Riordan \cite{BJR}

\begin{theorem}
 If $\lVert T_k \rVert < 1$, then the largest component $C_1$ has size $o(n)$. If $\lVert T_k \rVert > 1$, then the largest component has size $\Theta(n)$.
\end{theorem}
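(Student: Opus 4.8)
The plan is to reduce both statements to the survival/extinction dichotomy of a multitype branching process that describes the local neighborhood of a typical vertex, exactly as the homogeneous case reduces to a Poisson($\lambda$) Galton-Watson process. Explore the component of a vertex $i$ by breadth-first search: a vertex of type $x=i/n$ is joined to $j$ of type $y=j/n$ independently with probability $\kappa(x,y)/n$, so the number of its type-$y$ neighbors is asymptotically Poisson with intensity $\kappa(x,y)\,d\mu(y)$. In the limit this exploration is governed by the branching process in which an individual of type $x$ produces offspring forming a Poisson process on $S$ with intensity $\kappa(x,\cdot)$, whose mean offspring operator is precisely $T_k$. First I would record the standard facts that the survival probability $\rho(x)$ of this process is the maximal solution in $[0,1]$ of
\beq
\rho(x) = 1 - \exp\left(-\int_S \kappa(x,y)\rho(y)\,d\mu(y)\right),
\eeq
and that, because $\kappa$ is symmetric and nonnegative, $T_k$ is self-adjoint with $\lVert T_k\rVert$ equal to its spectral radius; assuming $\kappa\in L^2(\mu\times\mu)$ so that $T_k$ is Hilbert-Schmidt, hence compact, Krein-Rutman/Perron-Frobenius theory gives $\rho\equiv 0$ when $\lVert T_k\rVert\le 1$ and $\rho>0$ on a set of positive measure when $\lVert T_k\rVert>1$.

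For the subcritical half ($\lVert T_k\rVert<1$) I would couple the exploration to the branching process from above: since breadth-first search never revisits a vertex, the component $C(i)$ is stochastically dominated by the total progeny started from type $i/n$, whose mean is $\chi(i/n)$ with $\chi=(I-T_k)^{-1}\mathbf 1=\sum_{m\ge0}T_k^m\mathbf 1$, finite in $L^2$ because $\lVert T_k\rVert<1$. Summing the identity
$$
\sum_i |C(i)| = \sum_{\text{components }C}|C|^2 \ge |C_1|^2
$$
and taking expectations gives $\E|C_1|^2\le\sum_i\chi(i/n)\sim n\langle\mathbf 1,(I-T_k)^{-1}\mathbf 1\rangle=:Cn<\infty$, so $|C_1|=O_p(\sqrt n)=o(n)$, which is the first claim.

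The supercritical half ($\lVert T_k\rVert>1$) is where the real work lies, and I would split it into a lower bound and a uniqueness step. For the lower bound, couple the exploration to the branching process from below during its first $\omega=\omega(n)\to\infty$, $\omega=o(n)$ steps: as long as only $o(n)$ vertices have been uncovered, depletion of available endpoints perturbs each connection probability by a negligible factor, so $P(|C(i)|\ge\omega)\to\rho(i/n)$. Hence the number $N$ of vertices in components of size at least $\omega$ has $\E N\sim n\int_S\rho(x)\,d\mu(x)>0$, and a second-moment computation (most pairs of exploration neighborhoods are asymptotically independent) shows $N$ is concentrated. The hardest step is to upgrade ``a positive density of vertices sit in large components'' to ``they all sit in a single component of size $\Theta(n)$.'' Here I would use sprinkling: reveal the edges of $(1-\delta)\kappa$ first, which is still supercritical for small $\delta$ and produces many components of size $\ge\omega$, then use the independent reserved edges to join them. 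Choosing $\sqrt n\ll\omega\ll n$ leaves at most $n/\omega=o(\sqrt n)$ such components, while any two of them are joined by a reserved edge with probability $1-\exp(-\Omega(\omega^2/n))$, since irreducibility of $\kappa$ forces $\Omega(\omega^2/n)$ potential connecting edges; a union bound over the $o(n)$ pairs then merges everything into one giant component, giving $|C_1|=N=\Theta(n)$.

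The main obstacle is this supercritical uniqueness step: the lower coupling must be controlled so that the survival probability $\rho(i/n)$ is genuinely attained before depletion corrupts the Poisson approximation, and the sprinkling must be tuned so that no positive-density family of large but mutually disconnected components can persist. A secondary technical point is that deducing $\rho>0$ from $\lVert T_k\rVert>1$ relies on the compactness of $T_k$, hence on an integrability hypothesis on $\kappa$; for the rank-1 Chung-Lu kernel $\kappa(x,y)=\theta\,x^{-1/(\gamma-1)}y^{-1/(\gamma-1)}$, where $\lVert T_k\rVert=\theta\int_0^1\psi(x)^2\,dx$, this holds precisely when $\gamma>3$, and the singular regime $2<\gamma<3$, in which $T_k$ fails to be $L^2$-bounded, must be treated by separate arguments.
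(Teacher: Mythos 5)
This is Theorem~3.1 of Bollob\'as, Janson, and Riordan \cite{BJR}; the paper quotes it without proof, so there is no internal argument to compare against. Your sketch follows what is essentially the BJR strategy (branching-process approximation of the neighborhood exploration, a first/second-moment count of vertices in large components, and sprinkling to merge them), and the subcritical half via $\sum_i |C(i)| = \sum_C |C|^2 \ge |C_1|^2$ together with domination of $|C(i)|$ by the total progeny of the multitype process is sound, modulo the standard point that the norm of the discretized operator $(\kappa(i/n,j/n)/n)_{ij}$ must be shown to converge to $\lVert T_\kappa\rVert$.

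There are, however, two genuine gaps in the supercritical half. First, your merging step asserts that irreducibility of $\kappa$ forces $\Omega(\omega^2)$ potential connecting pairs between any two components of size $\ge \omega$. That does not follow: irreducibility only rules out a partition of $S$ into positive-measure sets with $\kappa=0$ across it, and says nothing about where the vertices of two particular large components happen to sit; they could in principle concentrate on type-sets between which $\kappa$ is arbitrarily small. BJR circumvent this by first truncating to a bounded kernel, partitioning $S$ into finitely many cells on which $\kappa$ is essentially constant, and showing that every large component contains $\Omega(\omega)$ vertices in some fixed cell from which the sprinkled edges can be counted; some version of this localization is needed before the union bound. Second, the statement as given does not assume irreducibility, yet your sprinkling argument (and your appeal to Perron--Frobenius for $\rho>0$) leans on it; the $\Theta(n)$ conclusion in the reducible case is obtained in \cite{BJR} by restricting to a supercritical irreducible sub-kernel, a reduction your outline omits. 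Finally, as you note yourself, deducing $\rho>0$ from $\lVert T_\kappa\rVert>1$ via compactness requires $\kappa\in L^2$, which fails precisely for the Chung--Lu kernels with $2<\gamma<3$ that motivate this paper; the standard fix is monotone approximation by bounded kernels $\kappa\wedge m$, and without it your argument does not cover the full range of the theorem as it is used here.
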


\noindent
Here $a_n=o(n)$ if $a_n/n \to 0$ while $b_n=\Theta(n)$ if there are constants 
$0 < B_1 < B_2 < \infty$ so that $B_1 < b_n/n < B_2$ for large $n$.

In the rank-1 case, the operator $T_\kappa$ has 
$$
T_k f = \theta \biggl (\int f \psi d\mu \biggl) \psi.
$$
Since the operator $T_\kappa$ maps all functions $f$ to multiples of $\psi$, $\psi$ is the only  eigenvector. It has eigenvalue $\theta \int \psi(x)^2 dx$ so the critical critical value
$$
\theta_c = \frac{1}{\int \psi(x)^2 dx}.
$$ 
Therefore, in the Chung-Lu model, 
$$
\theta_c = \begin{cases} 0 &\hbox{ if $\gamma \leq 3$} \\
>0 & \hbox{ if $\gamma >3$} \end{cases}
$$ 

To calculate the size of the giant component, we define a nonlinear operator by
$$
\Phi_\kappa f = 1 - \exp(-T_\kappa f).
$$
The survival probability $\rho_\kappa(x)$ is the maximum fixed point of $\Phi_\kappa$ so we have
\beq
\rho_{\kappa}(x) = 1 - \exp\left( - \int_0^1 \kappa(x,y) \rho_\kappa(x) \, dy \right).
\label{rhoeq}
\eeq
In the \ER case $\kappa(x,y) \equiv \lambda$ so $\rho_\kappa(x) \equiv \rho$ and
the equation becomes
$$
1 - \rho = \exp(-\lambda \rho )
$$ 
If we let $\zeta = 1- \rho$ be the extinction probability then this becomes
$$
\zeta = \exp( - \lambda (1-\zeta))
$$
a well-known result for the \ER case: when $\lambda>1$ the extinction probability is the fixed point in $(0,1)$ of the degree distribution, which is Poisson($\lambda$).

To state the next result which gives the size of the largest component $|{\cal C}_1|$, we need a definition. Define $\kappa$ is {\bf reducible} if there is an $A$ with $0 < \mu(A) \le \mu({\cal S})$ so that $\kappa(x,y)=0$ on $A \times ({\cal S}-A)$. Otherwise $\kappa$ is {\bf irreducible}.

\begin{theorem} \label{BJR3.1}
 Suppose $\|T_\kappa\| > 1$ and $\kappa$ is irreducible then
$$
\frac{|{\cal C}_1|}{n} \to \bar\rho = \int_0^1 \rho_\kappa(x) \, dx.
$$
\end{theorem}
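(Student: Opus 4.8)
The plan is to read off the component structure from the \emph{local exploration} of a typical vertex's neighborhood, which in the limit $n\to\infty$ is governed by a multitype Poisson branching process, and then to promote the resulting first-moment count of vertices in large components into a statement about a single giant by a sprinkling argument. The upper bound $\limsup|{\cal C}_1|/n \le \bar\rho$ is essentially a first- and second-moment computation; the matching lower bound, which requires that almost all vertices in large components lie in \emph{one} component, is where irreducibility and the real work enter.

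First I would set up the limiting branching process ${\cal B}_x$ on the type space $(0,1)$: an individual of type $x$ produces offspring according to a Poisson process of intensity $\kappa(x,y)\,dy$, and offspring reproduce independently. This is the correct limit because in the model a vertex $i$ of type $x=i/n$ joins each vertex $j$ of type $y=j/n$ with probability $\kappa(x,y)/n$, so the number of neighbors with types in a set $B$ is asymptotically Poisson with mean $\int_B \kappa(x,y)\,dy$. The mean offspring operator of ${\cal B}_x$ is exactly $T_\kappa$, so $\|T_\kappa\|>1$ is precisely the supercriticality condition guaranteeing positive survival. Let $\rho_{\ge k}(x)$ be the probability that ${\cal B}_x$ reaches total size at least $k$, and $\rho_\kappa(x)$ its survival probability. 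A routine branching-process calculation identifies $\rho_\kappa$ with the maximal fixed point of $\Phi_\kappa$, i.e.\ the solution of \eqref{rhoeq}, and shows $\rho_{\ge k}(x)\downarrow\rho_\kappa(x)$ as $k\to\infty$, so that $\int_0^1\rho_{\ge k}(x)\,dx\to\bar\rho$.

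Next I would make the coupling quantitative. For fixed $k$, the exploration started at vertex $\lceil nx\rceil$ can be coupled with ${\cal B}_x$ until size $k$ is reached, giving $P(|{\cal C}(\lceil nx\rceil)|\ge k)\to\rho_{\ge k}(x)$ and hence, writing $N_{\ge k}$ for the number of vertices in components of size at least $k$, $\E[N_{\ge k}]/n\to\int_0^1\rho_{\ge k}(x)\,dx$. Concentration I would get from a second-moment estimate: for most pairs $u,v$ the two bounded explorations are vertex-disjoint and nearly independent, so $\mathrm{Var}(N_{\ge k})=o(n^2)$ and $N_{\ge k}/n\to\int_0^1\rho_{\ge k}(x)\,dx$ in probability. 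Since $|{\cal C}_1|\le N_{\ge k}$ for every $k$, letting $k\to\infty$ yields $\limsup|{\cal C}_1|/n\le\bar\rho$.

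The hard part is the lower bound together with uniqueness of the giant: I must show that all but $o(n)$ of the $\approx n\bar\rho$ vertices sitting in components of size $\ge k$ belong to one common component. For this I would use sprinkling, writing the graph as the union of an independent $G_1$ built from $(1-\varepsilon)\kappa$ and a sparse sprinkle $G_2$. In $G_1$ one still finds $\Theta(n)$ vertices in components of size $\ge k$; revealing $G_2$, I would show any two such large components are joined with probability bounded away from $0$, so that after sprinkling all but $o(n)$ of them merge. Irreducibility enters exactly to forbid the alternative in which $(0,1)$ splits into $A$ and its complement with no edges across, which would permit two genuinely separate giants; continuity and symmetry of $\kappa$ make this reduction clean. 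The main obstacle I anticipate is controlling the sprinkling uniformly in $k$, so that the $o(n)$ merging error does not overwhelm the $k\to\infty$ limit, and this is where I expect the bulk of the effort to lie.
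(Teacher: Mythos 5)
The paper does not prove this statement: it is quoted verbatim from Bollob\'as, Janson, and Riordan \cite{BJR} (their Theorem 3.1), and the text offers only the one-line heuristic that $\rho_\kappa(x)$ is the survival probability of the local branching process started from type $x$. Your outline --- local coupling with the multitype Poisson branching process whose mean offspring operator is $T_\kappa$, a first- and second-moment argument showing $N_{\ge k}/n \to \int_0^1 \rho_{\ge k}(x)\,dx$, and a sprinkling step using irreducibility to merge the large components into a single giant --- is essentially the strategy of the cited proof, so there is nothing to compare against within this paper. The one substantive caveat: the kernels relevant here (e.g.\ $\kappa(x,y)=\theta\psi(x)\psi(y)$ with $\psi(x)=x^{-1/(\gamma-1)}$) are unbounded near $x=0$, so both the size-$k$ coupling and the claim that the sprinkle joins two linear-sized components with probability bounded away from $0$ require first truncating $\kappa$ to a bounded kernel and passing to the limit; your sketch implicitly assumes boundedness, and handling this approximation uniformly in $k$ is a real part of the work in \cite{BJR}.
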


\mn
To explain this, the probability that $x$ is in the giant component is the same as $\rho_\kappa(x)$ the probability that starting from $x$ does not die out, so the integral gives the fraction of vertices in the giant component. 

For the Chung-Lu model we can calculate $\bar\rho$ using \eqref{rhoeq}. This result, proved in Section 2, is

\begin{theorem} \label{CLgc}
As $\theta \downarrow \theta_c$ (which is 0 if $\gamma \le 3$)
$$
\bar\rho(\theta) \sim \begin{cases}
C_\gamma \theta^{1/(\gamma-3)} & 2 < \gamma < 3 \\
C_3\exp( - 1/2\theta) & \gamma = 3 \\
C_\gamma  (\theta-\theta_c) & \gamma > 3
\end{cases}
$$
\end{theorem}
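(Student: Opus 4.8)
The plan is to exploit the rank-$1$ structure to collapse the functional fixed-point equation \eqref{rhoeq} to a single scalar equation. Since $\kappa(x,y)=\theta\psi(x)\psi(y)$, any solution of \eqref{rhoeq} must have the form $\rho_\kappa(x)=1-\exp(-a\psi(x))$ where the scalar $a=\theta\int_0^1\psi(y)\rho_\kappa(y)\,dy$ does not depend on $x$. Substituting back with $\psi(x)=x^{-1/(\gamma-1)}$, the number $a$ is the maximal solution of $a=\theta F(a)$, where $F(a)=\int_0^1\psi(x)\bigl(1-e^{-a\psi(x)}\bigr)\,dx$, and the quantity we want is $\bar\rho=G(a)$ with $G(a)=\int_0^1\bigl(1-e^{-a\psi(x)}\bigr)\,dx$. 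Thus the whole theorem reduces to (i) the asymptotics of the positive root $a=a(\theta)$ of $a=\theta F(a)$ as $\theta\downarrow\theta_c$, and (ii) the asymptotics of $G(a)$ as $a\downarrow0$.

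The engine for both is the change of variables $u=a\psi(x)=ax^{-1/(\gamma-1)}$, which turns the two integrals into incomplete-Gamma-type integrals,
\[
F(a)=(\gamma-1)\,a^{\gamma-2}\!\int_a^\infty (1-e^{-u})\,u^{-(\gamma-1)}\,du,\qquad
G(a)=(\gamma-1)\,a^{\gamma-1}\!\int_a^\infty (1-e^{-u})\,u^{-\gamma}\,du.
\]
The point of this rewriting is that it isolates the singular behaviour near $x=0$, where $a\psi(x)$ is large no matter how small $a$ is, so that the naive Taylor expansion of $1-e^{-a\psi}$ under the integral is illegitimate. For $G$ the lower-limit behaviour $(1-e^{-u})u^{-\gamma}\sim u^{1-\gamma}$ gives $\int_a^\infty\sim a^{2-\gamma}/(\gamma-2)$ for every $\gamma>2$, hence $G(a)\sim m_1 a$ with $m_1=\int_0^1\psi=(\gamma-1)/(\gamma-2)$; so $\bar\rho\sim m_1 a$ in all three regimes and step (ii) is settled. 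Everything therefore hinges on step (i), i.e.\ on the $u\to0$ behaviour of $\int_a^\infty(1-e^{-u})u^{-(\gamma-1)}\,du$, which is exactly where $\gamma$ crosses $3$.

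The three cases then fall out according to whether this integral converges, diverges logarithmically, or diverges as a power at $u=0$. When $2<\gamma<3$ it converges to a finite constant $K_\gamma=\int_0^\infty(1-e^{-u})u^{-(\gamma-1)}\,du$, so $F(a)\sim(\gamma-1)K_\gamma\,a^{\gamma-2}$; inserting this into $a=\theta F(a)$ gives $a^{3-\gamma}\asymp\theta$, i.e.\ $a\asymp\theta^{1/(3-\gamma)}$, and hence $\bar\rho\sim C_\gamma\theta^{1/(3-\gamma)}$, the stated power law (here $\theta_c=0$). When $\gamma=3$ the integrand is $\sim u^{-1}$ near $0$ and $\int_a^\infty\sim\log(1/a)$, so $F(a)\sim2a\log(1/a)$; then $1=2\theta\log(1/a)$ gives $\log(1/a)\sim1/(2\theta)$ and $\bar\rho\sim2a\sim C_3 e^{-1/(2\theta)}$. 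When $\gamma>3$ the power divergence $\int_a^\infty\sim a^{3-\gamma}/(\gamma-3)$ reproduces the finite value $m_2=\int_0^1\psi^2=(\gamma-1)/(\gamma-3)=1/\theta_c$, so $F(a)=m_2 a+(\text{correction})$; the linear term cancels against $a/\theta$, and the root $a(\theta)\downarrow0$ is governed by the first nonlinear correction, yielding $\bar\rho\sim C_\gamma(\theta-\theta_c)$.

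I expect the main obstacle to be step (i) for $\gamma>3$, specifically pinning down the correction to $F(a)-m_2 a$ sharply enough to read off the exponent of $\theta-\theta_c$. Subtracting the singular part gives $F(a)-m_2 a\sim(\gamma-1)J_\gamma\,a^{\gamma-2}$ with $J_\gamma=\int_0^\infty(1-e^{-u}-u)u^{1-\gamma}\,du$ when $3<\gamma<4$, whereas for $\gamma>4$ (finite third moment $m_3=\int_0^1\psi^3$) the correction is instead of order $a^2$. Only in the latter range does $a=\theta F(a)$ produce the clean linear law $a\sim C(\theta-\theta_c)$; for $3<\gamma<4$ the same computation produces the anomalous exponent $a\sim C(\theta-\theta_c)^{1/(\gamma-3)}$, matching the known percolation critical exponent, so the linear statement should be understood as holding once $m_3<\infty$. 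The remaining technical points---verifying that $F$ is increasing and convex so the relevant fixed point is the unique positive root and depends continuously and monotonically on $\theta$ (hence $a(\theta)\downarrow0$ as $\theta\downarrow\theta_c$), and controlling the $o(1)$ errors in the change-of-variables integrals---are routine dominated-convergence estimates.
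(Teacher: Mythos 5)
Your proposal follows essentially the same route as the paper: collapse the rank-$1$ fixed-point equation to the scalar equation $a=\theta F(a)$ (the paper's $A_\theta=\theta g(A_\theta)$) and extract the asymptotics of $F$ near $0$ by the substitution $u=a\psi(x)$. You add two things the paper omits: the explicit final step $\bar\rho=G(a)\sim\bigl(\int_0^1\psi\bigr)a$ (the paper stops at $S_\infty(a)\sim C\psi(a)A_\theta$ without integrating), and a sketch of the $\gamma=3$ case, which the paper simply attributes to Riordan. More importantly, your caveat about $3<\gamma<4$ is a genuine catch, not a quibble. The paper's argument for $\gamma>3$ Taylor-expands $g$ to second order about $0$, but $g''(0)=-\int_0^1\psi(b)^3\,db=-\int_0^1 b^{-3/(\gamma-1)}\,db$ diverges whenever $\gamma\le 4$, so that expansion --- and hence the clean linear law $\bar\rho\sim C_\gamma(\theta-\theta_c)$ --- is only justified for $\gamma>4$ (i.e.\ when $\int\psi^3<\infty$). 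Your computation of the correction term, $F(a)-m_2a\asymp -a^{\gamma-2}$ for $3<\gamma<4$, leading to $a\sim C(\theta-\theta_c)^{1/(\gamma-3)}$, is the correct asymptotic in that range and matches the known critical exponent for rank-$1$ kernels with power-law weights; the theorem and its proof should be amended accordingly (with a logarithmic correction at $\gamma=4$). One last small point in your favour: the exponent stated in the theorem for $2<\gamma<3$, namely $\theta^{1/(\gamma-3)}$, is negative and must be a typo for $\theta^{1/(3-\gamma)}$, which is what both your derivation and the paper's actually produce.
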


\mn
The result for $\gamma=3$ is due to Riordan \cite{riordan2005}.

Our final result concerns the size of clusters in Chung-Lu model the subcritical regime. 
$\theta_c=0$ when $\gamma \le 3$ so we only have to consider $\gamma>3$.

\begin{theorem} \label{CLsubcl}
In the Chung-Lu model with $\gamma>3$, for any $\theta<\theta_c$, there exists a constant $C$ which depends on $\gamma$ and $\theta$ so that 
$$
P \left( \max_{v} |{\cal C}_v| \geq C n^{-1/(\gamma-1)} \right) \rightarrow 0
$$ 
\end{theorem}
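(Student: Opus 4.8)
The plan is to bound cluster \emph{sizes} by controlling cluster \emph{weights}, since the minimum weight is $w_n=\theta$ and therefore $|{\cal C}_v|\le W({\cal C}_v)/\theta$, where $W({\cal C}_v)=\sum_{u\in{\cal C}_v}w_u$. First I would run the breadth-first exploration of ${\cal C}_v$, revealing edges one processed vertex at a time and tracking the total weight $A_t$ of the set that has been discovered but not yet processed. When a vertex $u$ is processed the expected weight of its newly found neighbors is at most $\sum_j p_{uj}w_j=w_u\sum_j w_j^2/\Sigma=\beta w_u$, where $\Sigma=\sum_k w_k$ and $\beta=\sum_j w_j^2/\Sigma$; the subcritical hypothesis $\theta<\theta_c$ guarantees $\beta<1$. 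Hence $A_t+(1-\beta)\sum_{s<t}w_{u_s}$ is a supermartingale, and optional stopping gives $E\,W({\cal C}_v)\le w_v/(1-\beta)$. Since the heaviest vertex has weight $w_1=\theta n^{1/(\gamma-1)}$, this already identifies the correct scale for the largest cluster, namely $n^{1/(\gamma-1)}$.

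To upgrade this first-moment estimate to a bound on the maximum I would use the exponential version of the same supermartingale, $Z_t=\exp\!\big(\lambda A_t+h(\lambda)\sum_{s<t}w_{u_s}\big)$ with $h(\lambda)=\lambda-g(\lambda)$ and $g(\lambda)=\Sigma^{-1}\sum_j w_j(e^{\lambda w_j}-1)$. A computation like the one above shows $Z_t$ is a supermartingale, so $E\,e^{h(\lambda)W({\cal C}_v)}\le e^{\lambda w_v}$ and thus $P(W({\cal C}_v)\ge M)\le \exp(\lambda w_v-h(\lambda)M)$. Everything then hinges on how large $\lambda$ may be chosen: I claim that for $\gamma>3$ one may take $\lambda$ as large as $\asymp(\log n)/w_{\max}$ while keeping $g(\lambda)\le(\beta+o(1))\lambda$, so that $h(\lambda)\ge(1-\beta-o(1))\lambda$. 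This is the one place the hypothesis $\gamma>3$ is essential: finiteness of the second moment $\int\psi^2<\infty$ makes the bulk of $g(\lambda)$ linear in $\lambda$ with slope $\beta$, and a separate estimate shows that the contribution of the few heaviest weights stays negligible as long as $e^{\lambda w_{\max}}\ll n^{1-2/(\gamma-1)}$, i.e.\ as long as $s:=\lambda w_{\max}\lesssim(1-2/(\gamma-1))\log n$.

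For the maximum itself I would stratify clusters by their heaviest vertex. For each $u$ let $B_u$ be the event that $u$ is the largest-weight vertex of its own cluster and that $|{\cal C}_u|\ge M:=Cn^{1/(\gamma-1)}$; then $\{\max_v|{\cal C}_v|\ge M\}\subseteq\bigcup_u B_u$. On $B_u$ the cluster of $u$ is contained in the subgraph on vertices of weight at most $w_u$, whose branching ratio is still at most $\beta<1$, so the tail bound of the previous paragraph applies with $w_{\max}=w_u$ and $\lambda=s/w_u$, $s\asymp\log n$. Because $\theta M/w_u=C\,w_1/w_u=C\,u^{1/(\gamma-1)}$, this yields $P(B_u)\le \exp\!\big(s\,[1-(1-\beta)C\,u^{1/(\gamma-1)}]\big)=n^{\,c[1-(1-\beta)C\,u^{1/(\gamma-1)}]}$. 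The sum over $u$ is dominated by $u=1$, and choosing $C>1/(1-\beta)$ (a constant depending only on $\gamma$ and $\theta$) makes every exponent negative, so $P(\max_v|{\cal C}_v|\ge M)\le\sum_u P(B_u)\to 0$.

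The step I expect to be the main obstacle is the uniform control of the exponential moment in the second paragraph: proving that $\lambda\asymp(\log n)/w_u$ really lies inside the radius where $g(\lambda)\approx\beta\lambda$, and that the heavy-weight head of $g$ does not destroy the bound. This is precisely where $\gamma>3$ enters, and it is what lets the final scale be the bare $n^{1/(\gamma-1)}$ rather than $n^{1/(\gamma-1)}\log n$. By comparison the remaining ingredients---accounting for already-discovered vertices (which only lowers the drift and so is in our favor) and converting weight back to size through $|{\cal C}_v|\le W({\cal C}_v)/\theta$---are routine.
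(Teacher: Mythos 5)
Your strategy---running the exploration on cluster \emph{weight}, extracting a Chernoff bound $P(W({\cal C}_v)\ge M')\le\exp(\lambda w_v-h(\lambda)M')$ from the exponential supermartingale, and converting back to size via $|{\cal C}_v|\le W({\cal C}_v)/\theta$---is sound in outline and is genuinely different from the paper's proof, which tracks the \emph{number} of active vertices, truncates the increments at $M_1=n^{1/(\gamma-1)-\delta}$, and controls the centered truncated sum with Rosenthal's $r$-th moment inequality plus a separate count of exceedances. But there is a concrete gap in your stratified union bound. The condition you derive for $h(\lambda)\ge(1-\beta-o(1))\lambda$, namely $e^{\lambda w_{\max}}\ll n^{1-2/(\gamma-1)}$, rests on the heavy head of $g$ consisting of ``a few'' vertices. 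That is true when $w_{\max}=w_1$ (only polylogarithmically many $j$ have $\lambda w_j$ bounded away from $0$), but in the stratum of a light vertex $u$ the vertices of weight comparable to $w_u$ number $\Theta(u)$, not $O(1)$: every $j\in[u,2u]$ has $\lambda w_j\ge 2^{-1/(\gamma-1)}s$, so their contribution to $g(\lambda)$ is at least of order $(u w_u^2/n)\,e^{s/2}$, and requiring this to be $o(\lambda)=o(s/w_u)$ forces $e^{s}\lesssim (n/u)^{(\gamma-3)/(\gamma-1)}$, i.e.\ $s\lesssim\log(n/u)$ rather than $s\asymp\log n$. With your choice $s\asymp\log n$ one gets $g(\lambda)\gg\lambda$, hence $h(\lambda)<0$ and a vacuous bound, for all $u$ beyond some power of $n$; and with the corrected $s_u\asymp\log(n/u)$ the per-stratum bound $\exp(s_u[1-(1-\beta)Cu^{1/(\gamma-1)}])$ degenerates to $1-o(1)$ for the $\approx n^{1-1/(\gamma-1)}\log n$ lightest strata, so the claimed domination of the sum by $u=1$ does not go through as written.

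The good news is that the gap is repairable, and most easily by deleting the stratification altogether: the theorem only asks for \emph{some} constant $C$, not the optimal $C>1/(1-\beta)$. Take the single value $\lambda=c\log n/w_1$ with $c<(\gamma-3)/(\gamma-1)$ for every seed $v$; your own computation then gives $h(\lambda)\ge(1-\beta-\epsilon)\lambda$ (here the head really is polylogarithmic in size), so $P(|{\cal C}_v|\ge Cn^{1/(\gamma-1)})\le e^{\lambda w_v-h(\lambda)\theta C n^{1/(\gamma-1)}}\le n^{c-c(1-\beta-\epsilon)C}$ uniformly in $v$, and choosing $C>\frac{1}{1-\beta}\bigl(1+\frac{\gamma-1}{\gamma-3}\bigr)$ makes the union over all $n$ vertices tend to $0$. (Alternatively, keep the stratification but handle light strata with a constant $\lambda_0$, which is legitimate there because all remaining weights are bounded and yields stretched-exponentially small probabilities.) If you do want to retain your argument, the place to concentrate effort is exactly the uniform-in-$u$ control of $g(\lambda)$ that you flagged as the main obstacle; as stated, that step fails for light strata rather than merely being unproved.
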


\noindent
The lower bound on the size of the largest cluster can be easily calculated: since $P( d_i > k) \sim C k^{-(\gamma-1)}$ we have
$$
\max_{1\le i \le n} d_i = O( n^{-1/(\gamma-1)} )
$$
Since the cluster containing $i$ is larger than $d_i$, the lower bound follows.
To prove the upper bound, we have to estimate the probability that a vertex has a cluster that is much larger than its degree. The proof parallels Janson's proof \cite{jansonsub} for the configuration model but is easier since the presence of edges in the inhomogeneous random graph are independent events, which is not true in the configuration model. The remaining sections of the paper is devoted to proofs, which Theorem 3 is proved in Section 2, and Theorem 4 in Section 3.

\section{Survival probabilities}

In this section we will prove Theorem \ref{CLgc} when $2<\gamma < 3$ and $\gamma > 3$.
The first step is to observe that if $a$ is fixed, then as $n\to\infty$ the vertices $b$ that are connected to $a$ are a {\bf Poisson process with mean measure} $\kappa(a,b) \, db$ on $(0,1]$. That is, if we let $N(A)$ be the number of points in $A$ then

\mn
(i) If $A_1, \ldots A_k$ are disjoint, then $N(A_1), \ldots N(A_k)$ are independent

\mn
(ii) $N(A)\hbox{ has Poisson}\left(\int_A \kappa(y,z) \, dz \right)$ distribution.

\mn
To make it easier to compare with Riordan's work \cite{riordan2005} we will change variables $\rho_\kappa(a) = S_\infty(a)$.

If the initial type is $a$, then the expected number of first generation children that survive is
$$
\mu = \int_0^1 \kappa(a,b) S_\infty(b) \, db
$$
The set of offspring is a Poisson process on $(0,1]$. If we thin the Poisson process by keeping only the points that have giant components, the result is a Poisson process with mean measure $\kappa(a,b) S_\infty(b) \, db$.

If the number of neighbors of $a$ that have a giant component has mean $\mu$ then the probability probability of at least one has a giant component is $1 -e^{-\mu}$ so
\beq
S_\infty(a) = 1 - \exp\left( - \int_0^1 \kappa(a,b)  S_\infty(b) \, db \right)
\label{Saeq}
\eeq
Specializing to the Chung-Lu model $\kappa(a,b) = \theta \psi(a)\psi(b)$,
with  $\psi (x) = x^{-1/(\gamma -1)}$.
If we let $A_\theta =  \int_0^1 \theta \psi(b) S_\infty(b) \, db$ then
\beq
S_\infty(a) = 1 - \exp(-A_\theta \psi(a))
\label{Saeq2}
\eeq
so using \eqref{Saeq} and \eqref{Saeq2}, equating the exponentials, dividing by $\psi(a)$, and multiplying by $\sqrt{a}$ we have
\beq
A_\theta = \theta \int_0^1 \psi(b) [1 - \exp(-A_c \psi(b) ] \, db
\label{Aceq}
\eeq
If we let
\beq 
g(x) = \int_{0}^{1} \psi(b) [1-\exp(-x \psi(b))] \, db \label{gx}
\eeq
Then, we have
\beq
A_\theta = \theta g(A_\theta)
\eeq

\mn
{\bf Case 1.} $\gamma>3$
\beq
\int_{0}^{1} \psi(x) ^2 dx < \infty
\eeq
and the critical value $\theta_c = 1/ \int_{0}^{1} \psi(x) ^2 dx$. Using \eqref{gx} we have 
\beq
g(x) \sim x \int_{0}^{1} \psi(y) ^2 dy
\eeq
as $x\to 0$. Differentiating equation (\ref{gx})
\beq
g'(x) \sim \int_{0}^{1} \psi(b) ^2 \exp(-x \psi(b)) db
\eeq
where $g$ has a finite slope at 0. In this case, since $x$ is concave, as $x$ increases, $g'(x)$ decreases. 

Since $g(0) = 0$, $g'(0) = \int_{0}^{1} \psi(x) ^2 dx $, and $g(x) \rightarrow \int_{0}^{1} \psi(b) db$ as $x \rightarrow \infty$, we have a solution of $x/\theta =g(x)$ only when $1/\theta < \int_{0}^{1} \psi(x)^2 dx$, that is, when $\theta>\theta_c$. Expanding $g$ in power series about 0 we have
\beq
g(x) = x g'(0) + \frac{x^2}{2} g''(0) + ...
\eeq
and thus
\beq
A_\theta = \theta g(A_\theta) \approx \theta g'(0) A_\theta + g''(0) \frac{A_{\theta}^2}{2}
\eeq
substituting $\theta_c = 1/g'(0)$ into the above equation, we can calculate $A_\theta$ as
\beq
A_\theta = \frac{2(\theta g'(0) -1)}{-g''(0)} = \frac{2g'(0) }{-g''(0)}(\theta-\theta_c)
\eeq
Since $S_{\infty} (a) = 1- \exp (- A_\theta \psi(a)) \sim \psi(a)A_\theta$
as $A_\theta$ approaches 0. Thus as $\theta$ approaches $\theta_c$, 
\beq
S_{\infty} (a) \sim C\psi(a) (c-c_c)
\eeq
proving the conclusion for $\gamma>3$.

\mn
{\bf Case 2.} When $2< \gamma <3$, we have
\beq
g(x) = \int_{0}^{1} \frac{1}{b^{1/(\gamma -1)}} (1-\exp(-x/b^{1/(\gamma-1)})) db
\eeq
We set $z = b^{1/(\gamma -1)}$ and $dz = 1/(\gamma -1) b^{1/(\gamma-1) -1}$, which subsequently gives
\beq
db = (\gamma - 1) b^{-1/(\gamma -1) + 1} = (\gamma - 1) z^{-1+(\gamma - 1)} dz = (\gamma -1) \frac{dz}{z^{2-\gamma}}
\eeq
Thus, we have
\beq 
g(x) = (\gamma -1) \int_{0}^{1} \frac{1}{z^{3-\gamma}}(1-\exp(-x/z)) dz
\eeq
Let $z = x/y$, which means $dz = -(x/y^2)\, dy$ to get 
\beq
g(x) = x^{\gamma - 2} \int_{x}^{\infty} y^{1-\gamma} (1-e^{-y})dy
\eeq

Since we have $\gamma >2$, we have $1-\gamma < -1$. Substitute into the above equation, we can see that the integral is convergent at $\infty$.  Since we have $\gamma <3$, we have $2-\gamma < -1$ , which the integral is convergent at 0 and 
\beq
g(x) \sim C_{\gamma} x^{\gamma -2}
\eeq
where
\beq 
C_{\gamma} = \int_{0}^{\infty} y^{1-\gamma} (1-e^{-y})dy
\eeq
As $x$ approaches $0$, $x/g(x) \sim x^{3-\gamma}/C_{\gamma}$. Since $\theta = A_\theta / g (A_\theta)$, we have $\theta \sim A_\theta^{3-\gamma} / C_{\gamma}$ which implies
\beq
A_c \sim (C_\gamma x)^{1/(3-\gamma)}
\eeq
and completes the proof for $2<\gamma<3$.

\section{Subcritical cluster sizes}

In this section we continue to  investigate the Chung-Lu model with $\kappa (x,y) = \theta \psi(x) \psi (y)$ where $\psi(x) = x^{-1/(\gamma-1)}$. Since $\theta_c=0$ for $\gamma > 3$, we now restrict our attention to $\gamma>3$. In this case $\int \psi(y)^2 \, dy < \infty$ so 
$$
\theta_c = 1/ \int_0^1  \psi(y)^2 dy > 0.
$$  
The Cauchy-Schwarz inequality implies that
$$
B =\int \psi(u) \, du < \left( \int  \psi(y)^2 dy \right)^{1/2} < \infty
$$ 
When $B<\infty$ the set of neighbors of the vertex $y$ can be constructed in two steps.

\mn
(i) The total number of neighbors is $N(y)=$ Poisson with mean 
$$
\int \kappa(y,z) \, dz = \theta\psi(y) B. 
$$
(ii) Conditional on $N(y)=m$, the labels of the $m$ neighbors are distributed according to  
\beq
\bar\psi(z) = \psi(z)/B.
\eeq

\medskip
Given these observations, we can construct the {\bf exploration process} in which one reveals the neighbors of vertices one at a time to get a random walk as follows: if we start from $X_0=x_0$, then the number of neighbors $N(x_0) =$ Poisson with mean $\theta\psi(x_0)B$. Since $A_0=1$ we have
$$
A_1 = N(x_0)
$$
For all $t\ge 1$ the vertex $X_t$ whose neighbors are added on the $t$-th step is distributed according to $\bar\psi(z)$, and
$$
A_{t+1} = A_t  -1 + N(X_t)
$$
The distribution of $N(X_t)$ is a mixture
$$
\int_0^1 \bar \psi(y) \, \hbox{Poisson}(c\psi(y)B) \, dy
$$

To compute the distribution of $N(X_t)$ we use the methods of Section 13 in \cite{BJR}. To see the answer easily, we think of $\text{Poisson}(\theta\psi(y)B)$ as being a point mass at the mean. This can be justified by noting that the standard deviation of Poisson($\lambda$) is $\sqrt{\lambda} \ll \lambda$. Observe that $c\psi(y) B \geq x$ when 
$$
y^{-1/{\gamma-1}}\geq x/cB \hspace{1cm}\text{or} 
\hspace{1cm} y \leq (x/cB)^{-(\gamma-1)}
$$ 
and letting $z = (x/cB)^{-(\gamma-1)}$, we have 
\beq
P(N(X_t) \geq x) = \int_{0}^{z} \frac{\psi(y)}{B} dy = \frac{C}{B} x^{1-1/(\gamma-1)} = C' x^{-(\gamma-1)+1} \label{3.6.13}
\eeq
where in this section $C$ is a constant that changes from line to line. Here, we have indicated that the constant changed was made by writing $C'$.

\begin{proof} [Proof of Theorem \ref{CLsubcl}.]
At time $\tau = \inf\{ t : A_t=0 \}$ we have found the $\tau$ members of the cluster.
The exploration process is easier to analyze if it is homogeneous in time, so we will
suppose that $X_0$ is distributed according to $\bar\psi(z)$.  To facilitate the comparison with Janson \cite{jansonsub}, we rewrite the recursion $A_{t+1} = A_t -1 + N(X_t)$ as 
$$
S_{t+1} = S_t - 1 + \xi_t
$$
 If $|{\cal C}(x_0)| \geq M$, then $\tau \geq M$ and 
    \beq
    S_M = d(x_0) + \sum_{i=1}^{M} (\xi_i -1) 
    \label{S_M}
    \eeq
Since $\theta < \theta_c$, we have $\E[\xi_i] = v <1$. If $A$, which will be chosen later, is large enough and $\delta < 1/(\gamma-1)$, we let 
\beq
M = An^{1/(\gamma -1)} \qquad\hbox{and}\qquad
 M_1 = n^{1/(\gamma-1) - \delta}
\label{MM1}
\eeq
where $M$ will be the upper bound for the largest cluster size $|{\cal C}^1|$. Suppose we define the truncated random variables $Y_i = \xi_i 1_{(\xi_i \leq M_1)}$, then we have 
$$
\E[Y_i] \leq \E[\xi_i] = v < 1.
$$ 
If $|{\cal C}(x_0)| > M$, then (\ref{S_M}) holds. Since $S_M \ge 0$, \eqref{S_M} implies
$$
M \le  d(x_0) + \sum_{i=1}^{M} \xi_i
$$
Using  $\xi_i \leq \Delta = \max_i d_i$, the above implies
\begin{align} 
M & \leq \Delta + \sum_{i=1}^M \xi_i 1_{(\xi_i \leq M_1)} 
+ \sum_{i=1}^M \xi_i 1_{(\xi_i > M_i)}
\nonumber\\
    & \leq \sum_{i=1}^{M} Y_i + \Delta \biggl( 1+ \sum_{i=1}^{M} 1_{(\xi_i > M_i)}\biggl)\\
 & \leq vM +\sum_{i=1}^M (Y_i - \E Y_i) + \Delta \biggl( 1+\sum_{i=1}^M 1_{(\xi_i > M_i)}\biggl)
\nonumber
\end{align}
If $v< 1-2\epsilon$ then on $|{\cal C}(x_0)| > M$ we have
$$
2\ep M \leq \sum_{i=1}^M (Y_i - \E Y_i) 
+ \Delta \biggl( 1+\sum_{i=1}^M 1_{(\xi_i > M_i)}\biggl)
$$
From this we conclude that
\beq
P(|{\cal C}(x_0)| \geq M) \leq P \biggl(\sum_{i=1}^M |Y_i - \E Y_i| > \epsilon M\biggl) + P\biggl(\sum_{i=1}^{M}1_{(X_i > M_i)} > \epsilon \frac{M}{\Delta} -1\biggl) \label{3.6.17}
\eeq
To bound the first sum in equation (\ref{3.6.17}), we use Rosenthal's inequality \cite{rosenthal} for non-negative random variables: 
\beq
\E\left(\sum_{k=1}^n Z_i\right)^ r 
\leq C_r \max \Biggl(\biggl( \sum_{k=1}^n \E Z_{i}^2\biggl)^{r/2}, \sum_{k=1}^n \E Z_{i}^r\Biggl)
\label{Rosen}
\eeq
for $r \ge \gamma$ so that $r\delta > 2$. Letting $Z_i = |Y_i -\E Y_i|$, and $n=M$ we have 
$$
 \E \biggl | \sum_{i=1}^M |Y_i - \E Y_i|\biggl|^r 
 \leq C_r M^{r/2} (\E |Y_1 - \E Y_1|^2)^{r/2} + C_r M \E |Y_1 - \E Y_1|^r 
$$
In the first term we used the fact that $\E (X-c)^2$ is minimized when $c = \E X$. For the second term, suppose we write $\E (Z;A)$ for the integral of $Z$ over $A$
\begin{align*}
    \E|Y_1-\E Y_1|^r & = \E(|Y_1 - \E Y_1|^r ; Y_1 \geq \E Y_1) 
+ \E (|Y_1-\E Y_1|^r; Y_1< \E Y_1)\\
    & \leq \E(|Y_1|^r; Y_1 \geq \E Y_1) + |\E Y_1|^r P(Y_1<\E Y_1) \leq 2 \E|Y+1|^r
\end{align*}
because $Y_1 \geq 0$, and Jensen's inequality can be used to put the $r$-th power inside the expected value
\beq
 \E \biggl | \sum_{i=1}^M |Y_i - \E Y_i|\biggl|^r 
\leq C_r M^{r/2} (\E |Y_1|^2)^{r/2} + C_r M \E |Y_1 |^r.
\label{Rosen2}
\eeq

The second moment of $Y_1$ can be estimated using \eqref{3.6.13} when $\gamma>3$
\begin{align}
    \E Y_1^2 & = \int_{0}^{\infty} 2x P(Y_1 > x) \, dx 
  = \int_{0}^{M_1} 2x P(\xi_1 > x) dx
\nonumber \\
& \leq 1+ C \int_{1}^{M_1} 2x^{3-\gamma} dx
 \leq C M_1 \label{3.6.19}
\end{align}
Trivially, $\E Y_1^r \leq M_1^r$. Using Markov's inequality, the second moment estimate above, $Y_1 \leq M_1$, and $M_1 / M = A^{-1} n^{-\delta}$, we have 
\begin{align}
    P\biggl( \sum_{i=1}^{M}|Y_i - \E Y_i| > \epsilon M \biggl) 
& \leq (\epsilon M)^{-r} \E \biggl | \sum_{i=1}^M |Y_i - \E Y_i|\biggl|^r
\nonumber\\
    & \leq C (\E Y_1^2 / M)^{r/2} + C  M^{1-r} \E Y_1^r
 \label{3.6.20}\\
&   \leq C' (M_1 / M) ^{r/2} + C' M (M_1 / M)^r \\
    & \leq C'' n^{-r \delta / 2} + C'' n^{1-r\delta}= o(n^{-1})
\nonumber
\end{align}
since $r\delta >2$

For the second sum in (\ref{3.6.17}), we write $I_i = 1_{\xi_i > M}$ and we have 
\begin{align*}
    P\biggl( \sum_{i=1}^M I_i \geq L \biggl) 
& \leq {M \choose L} P (I_k = 1, 1\leq k \leq L)
     = {M \choose L} P(I_1 = 1)^L\\
    & = {M \choose L} P(X > \xi_i)^L \leq (MP(\xi_i > M_1))^ L
\end{align*}
By the tail bound on $\xi_i$ in (\ref{3.6.13}) and the choices of $M, M_1$ in  \eqref{MM1}, we have
\begin{align*}
    MP(\xi_1 > M_1) & \leq M \cdot  C M_1^{-(\gamma - 1)+1}\\
    & \leq C An^{1/(\gamma - 1) + (2-\gamma)[1/(\gamma-1) -\delta]}\\
    & = C A_n^{(\gamma-2)\delta - (\gamma-3)/(\gamma-1)}
\end{align*}
If we choose $\delta>0$ so that 
$\delta_1 = (\gamma -3)/(\gamma -1) - (\gamma -2)\delta > 0$ then 
\beq
MP (\xi_1 > M_1) = O(n^{-\delta_1})\quad\hbox{ and} 
\quad
P\biggl(\sum_{i=1}^{M} I_i \geq L \biggl) = O(n^{-L \delta_1}) \label{3.6.22}
\eeq
If we choose $L> 1/\delta_1$, then the last error term is $o(n^{-1})$.  If $A$ is chosen large enough, 
\beq
\epsilon \frac{M}{\Delta} -1 > L \text{ for large $n$} \label{3.6.23}
\eeq
with (\ref{3.6.20}) and (\ref{3.6.22}) we have shown that the right-hand side of (\ref{3.6.17}) is $o(n^{-1})$), and the proof of Theorem 4 is complete. 

\end{proof}

\clearp

\end{document}